\theoremstyle{plain}
\newtheorem{theorem}[subsection]{Theorem}
\newtheorem{lemma}[subsection]{Lemma}
\newtheorem{proposition}[subsection]{Proposition}
\theoremstyle{definition}
\theoremstyle{remark}
\newtheorem{example}[subsection]{Example}
\newtheorem{remark}[subsection]{Remark}
\newenvironment{tfae}
{
\begin{enumerate}}
{\end{enumerate}}
\newcommand{\noproof}{\hfill \qed}
\newcommand{\comp}{\raisebox{0.2mm}{\ensuremath{\scriptstyle{\circ}}}}
\newcommand{\defn}{\textbf}
\newcommand{\To}{\Rightarrow}
\newcommand{\tensor}{\ensuremath{\otimes}}
\newcommand{\gnab}{\text{\rm \textexclamdown}}
\newcommand{\links}{\lgroup}
\newcommand{\rechts}{\rgroup}
\DeclareMathOperator{\ev}{ev}
\DeclareMathOperator{\Hom}{Hom}
\DeclareMathOperator{\kar}{char}
\newcommand{\C}{\ensuremath{\mathcal{C}}}
\newcommand{\V}{\ensuremath{\mathcal{V}}}
\newcommand{\K}{\ensuremath{\mathbb{K}}}
\newcommand{\N}{\ensuremath{\mathbb{N}}}
\newcommand{\Z}{\ensuremath{\mathbb{Z}}}
\newcommand{\coc}{\ensuremath{\mathsf{coc}}}
\newcommand{\AAAlg}{\ensuremath{\mathsf{AAAlg}}}
\newcommand{\Alt}{\ensuremath{\mathsf{ACAlg}}}
\newcommand{\BDAlg}{\ensuremath{\text{$B$-$\mathsf{DAlg}$}}}
\newcommand{\Lie}{\ensuremath{\mathsf{Lie}}}
\newcommand{\Alg}{\ensuremath{\mathsf{Alg}}}
\newcommand{\Vect}{\ensuremath{\mathsf{Vect}}}
\newcommand{\Cat}{\ensuremath{\mathsf{Cat}}}
\newcommand{\DRng}{\ensuremath{\mathsf{DRng}}}
\newcommand{\Gp}{\ensuremath{\mathsf{Gp}}}
\newcommand{\Leib}{\ensuremath{\mathsf{Leib}}}
\newcommand{\SLeib}{\ensuremath{\mathsf{SLeib}}}
\newcommand{\Mag}{\ensuremath{\mathsf{Mag}}}
\newcommand{\Pt}{\ensuremath{\mathsf{Pt}}}
\newcommand{\XMod}{\ensuremath{\mathsf{XMod}}}
\newcommand{\Set}{\ensuremath{\mathsf{Set}}}
\newcommand{\Hopf}{\ensuremath{\mathsf{Hopf}_{\K,\coc}}}
\newcommand{\CoAlg}{\ensuremath{\mathsf{CoAlg}_{\K,\coc}}}
\newcommand{\ACC}{{\rm (ACC)}}
\newcommand{\LACC}{{\rm (LACC)}}
\def\pullback{
 \ar@{-}[]+R+<6pt,-1pt>;[]+RD+<6pt,-6pt>%
 \ar@{-}[]+D+<1pt,-6pt>;[]+RD+<6pt,-6pt>}
\def\dottedpullback{%
 \ar@{.}[]+R+<6pt,-1pt>;[]+RD+<6pt,-6pt>%
 \ar@{.}[]+D+<1pt,-6pt>;[]+RD+<6pt,-6pt>}
\begin{document}

\title[A characterisation of Lie algebras]{A characterisation of Lie algebras\\ amongst anti-commutative algebras}

\author{Xabier García-Martínez}
\address[Xabier García-Martínez]{Departamento de Matemáticas, Esc.\ Sup.\ de Enx.\ Informática, Campus de Ourense, Universidade de Vigo, E--32004, Ourense, Spain\newline
and\newline
Faculty of Engineering, Vrije Universiteit Brussel, Pleinlaan 2, B--1050 Brussel, Belgium}
\email{xabier.garcia.martinez@uvigo.gal}

\author{Tim Van~der Linden}
\address[Tim Van~der Linden]{Institut de
Recherche en Math\'ematique et Physique, Universit\'e catholique
de Louvain, che\-min du cyclotron~2 bte~L7.01.02, B--1348
Louvain-la-Neuve, Belgium}
\thanks{}
\email{tim.vanderlinden@uclouvain.be}

\thanks{This work was partially supported by Ministerio de Economía y Competitividad (Spain), grant MTM2016-79661-P. The first author was also supported by Xunta de Galicia, 
	grant GRC2013-045 (European FEDER support included), 
	by an FPU scholarship of the Ministerio de Educación, Cultura y Deporte (Spain) and by a Fundaci\'on Barri\'e scholarship. The first author is a Postdoctoral Fellow of the Research Foundation--Flanders (FWO). The second author is a Research
Associate of the Fonds de la Recherche Scientifique--FNRS}

\begin{abstract}
Let $\K$ be an infinite field. We prove that if a variety of anti-commutative $\K$-algebras---not necessarily associative, where $xx=0$ is an identity---is locally algebraically cartesian closed, then it must be a variety of Lie algebras over $\K$. In particular, $\Lie_{\K}$ is the largest such. Thus, for a given variety of anti-commutative $\K$-algebras, the Jacobi identity becomes equivalent to a categorical condition: it is an identity in~$\V$ if and only if $\V$ is a subvariety of a locally algebraically cartesian closed variety of anti-commutative $\K$-algebras. This is based on a result saying that an algebraically coherent variety of anti-commutative $\K$-algebras is either a variety of Lie algebras or a variety of anti-associative algebras over $\K$.
\end{abstract}

\subjclass[2010]{08C05, 17A99, 18B99, 18A22, 18D15}
\keywords{Lie algebra; anti-associative, anti-commutative algebra; algebraically coherent, locally algebraically cartesian closed, semi-abelian category; algebraic exponentiation}

\maketitle

\section{Introduction}
The aim of this article is to prove that, if a variety of anti-commutative algebras---not necessarily associative, where $xx=0$ is an identity---over an infinite field admits \emph{algebraic exponents} in the sense of James Gray's Ph.D.\ thesis~\cite{GrayPhD}, so when it is \emph{locally algebraically cartesian closed} (or \LACC\ for short, see~\cite{Gray2012,Bourn-Gray}), then it must necessarily be a variety of Lie algebras. 
Since, as shown in~\cite{GrayLie}, the category $\Lie_{\K}$ of Lie algebras over a commutative unitary ring $\K$ is always \LACC, this condition may be used to characterise Lie algebras amongst anti-commutative algebras. 

The only other non-abelian ``natural'' examples of locally algebraically cartesian closed semi-abelian~\cite{Janelidze-Marki-Tholen} categories we currently know of happen to be categories of group objects in a cartesian closed category~\cite{Gray2012}, namely
\begin{enumerate}
\item the category~$\Gp$ of groups itself;
\item the category $\XMod$ of crossed modules, which are the group objects in the category $\Cat$ of small categories~\cite{LR,MacLane}; and
\item the category $\Hopf$ of cocommutative Hopf algebras over a field $\K$ of characteristic zero~\cite{GKV, acc}, the group objects in the category $\CoAlg$ of cocommutative coalgebras over $\K$.
\end{enumerate}
At first with our project we hoped to remedy this situation by finding further examples of \LACC\ categories of (not necessarily associative) algebras. However, all of our attempts at constructing such new examples failed. Quite unexpectedly, in the end we managed to prove that, at least when the field $\K$ is infinite, amongst those algebras which are anti-commutative, \emph{there are no other examples}: the condition \LACC\ implies that the Jacobi identity holds. Thus, in the context of anti-commutative algebras, the Jacobi identity is characterised in terms of a purely categorical condition. This is the subject of Section~\ref{Main result}.

We do not study here what happens when the algebras considered are not anti-commutative. The category of Leibniz algebras is not \LACC, so at least one of the implications in our characterisation fails in that case. We make a few additional observations in Section~\ref{anti-commutative}, and leave the main question for the article~\cite{GM-VdL3}. 

\subsection{Cartesian closedness}
\emph{Algebraic exponentiation} is a categorical-algebraic version of the concept of \emph{exponentiation} familiar from set theory, linear algebra, topology, etc. In its most basic form, exponentiation amounts to the task of equipping the set $\Hom_{\C}(X,Y)$ of morphisms from $X$ to $Y$ with a suitable structure making it an object $Y^{X}$ in the category $\C$ at hand, while at the same time making the \emph{evaluation map} $\ev\colon{X\times Y^X\to Y}$ into a morphism. 

Depending on the given category $\C$, this may or may not be always possible. A~category with binary products $\C$ is said to be \defn{cartesian closed} when every object $X$ is \defn{exponentiable}, which means that the functor $X\times(-)\colon {\C\to \C}$ admits a right adjoint $(-)^{X}\colon {\C\to \C}$, so that for all $Y$ and $Z$ in~$\C$, the set ${\Hom_{\C}(X\times Z,Y)}$ is naturally isomorphic to ${\Hom_{\C}(Z,Y^{X})}$. This condition may be formulated in terms of a universal property as follows---see, for instance, \cite[Section~A1.5]{Johnstone:Elephant}: an object $X$ is exponentiable if and only if for every $Y$ there exists an object $Y^{X}$ and a morphism $\ev\colon{X\times Y^X\to Y}$ (called the \defn{evaluation}) such that for every $h\colon{X\times Z\to Y}$ there is a unique $\overline{h}\colon {Z\to Y^X}$ for which the triangle
\[
\xymatrix{X\times Z \ar[rr]^-{1_X\times \overline{h}} \ar[rd]_-{h} && X\times Y^X \ar[ld]^-{\ev} \\
& Y}
\]
commutes. 

The category $\Set$ of sets is cartesian closed, with $Y^{X}$ the set $\Hom_{\Set}(X,Y)$ of functions from $X$ to $Y$. The evaluation map $\ev\colon{X\times Y^X\to Y}$ takes a couple $(x,f)\in X\times Y^X$ and sends it to $f(x)\in Y$. Also the category $\Cat$ of small categories is cartesian closed. The category~$Y^{X}$ has functors ${X\to Y}$ as objects, and natural transformations between them as morphisms. For any commutative ring $\K$, the category $\CoAlg$ of cocommutative coalgebras over $\K$ is cartesian closed by a result in~\cite{Barr-Coalgebras}.

\subsection{Closedness in general}
The categories occurring in algebra are seldom cartesian closed. The concept of closedness has thus been extended in several different directions. One option is to replace the cartesian product by some other product, such as for instance the tensor product $\tensor_{\K}$ when $\C$ is the category $\Vect_{\K}$ of vector spaces over a field $\K$. In that case the result is the well-known tensor/hom adjunction, where the object $Y^{X}$ in the isomorphism $\Hom_{\K}(X\tensor_{\K} Z,Y)\cong\Hom_{\K}(Z,Y^{X})$ is the set of $\K$-linear maps $\Hom_{\K}(X,Y)$ with the pointwise $\K$-vector space structure.

\subsection{An alternative approach}
Another option, fruitful in non-abelian algebra, is to keep the cartesianness aspect of the condition, but to make it algebraic in an entirely different way~\cite{GrayPhD,Gray2012,Bourn-Gray}. To do this, we first need to understand what is \emph{local} cartesian closedness by reformulating the condition in terms of slice categories. Here we follow Section~A1.5 of~\cite{Johnstone:Elephant}.

\subsection{Bundles and their global sections}
Let $\C$ be any category. Given an object $B$ of~$\C$, we write $(\C\downarrow B)$ for the \defn{slice category} or \defn{category of bundles over~$B$} in which an object $x$ is an arrow $x\colon {X\to B}$ in~$\C$, and a morphism $f\colon{x\to y}$ is a commutative triangle 
\[
\xymatrix{X \ar[rr]^-{f} \ar[rd]_-{x} && Y \ar[ld]^-{y} \\
& B}
\]
in $\C$, so that $y\comp f=x$.

A \defn{global section} of a bundle $y\colon{Y\to B}$ is the same thing as a \defn{global element} of this object $y$: a morphism $1\to y$, where $1$ is the terminal object $1_B$ of $(\C\downarrow B)$. In other words, it is a section $f\colon{B\to Y}$ of $y\colon{Y\to B}$, so that $y\comp f = 1_B$.

\subsection{Local cartesian closedness}
Assuming now that $\C$ is finitely complete, given a morphism $a\colon {A\to B}$, we write 
\[
a^{*}\colon{(\C\downarrow B)\to (\C\downarrow A)}
\]
for the \defn{change-of-base functor} which takes an arrow $x\colon {X\to B}$ in $\C$ and sends it to its pullback $a^{*}(x)$ as in the diagram
\[
\xymatrix{A\times_{B}X \ar[r] \ar[d]_-{a^{*}(x)} \pullback & X \ar[d]^-{x} \\
A \ar[r]_-{a} & B.}
\]
If $B$ is the terminal object $1$ of $\C$ then $(\C\downarrow B)=(\C\downarrow 1)\cong \C$. Any object $A$ of $\C$ now induces a unique morphism $a=!_{A}\colon {A\to 1}$, and the functor $!^{*}_{A}\colon{\C\to (\C\downarrow A)}$ sends an object $Y$ to the product $A\times Y$ (considered together with its projection to~$A$). It is easily seen that the category $\C$ is cartesian closed if and only if for every $X$ in $\C$, the functor $!^{*}_{X}$ admits a right adjoint.

A category with finite limits $\C$ is said to be \defn{locally cartesian closed} or \defn{(LCC)} when for \emph{every} morphism $a\colon {A\to B}$ in $\C$ the change-of-base functor $a^{*}$ has a right adjoint. Equivalently, all slice categories $(\C\downarrow B)$ are cartesian closed---so that $\C$ is cartesian closed, \emph{locally over $B$}, for all $B$ in $\C$. This condition is stronger than cartesian closedness (the case $B=1$); examples include any Grothendieck topos, in particular the category of sets, while for instance~\cite{Conduche-Pullback} the category $\Cat$ is not (LCC), even though it is cartesian closed.

\subsection{Categories of points}
We may now mimic the concept of (local) cartesian closedness in such a way that it applies to \emph{global sections of bundles} instead of the bundles themselves. The idea is that, where \emph{slice categories} are very useful in non-algebraic settings, for certain applications in algebraic categories a similar role may be played by \emph{categories of points}. 

Let $\C$ be any category. Given an object $B$ of~$\C$, we write $\Pt_{B}(\C)$ for the \defn{category of points over $B$} in which an object $(x,s)$ is a split epimorphism 
$x\colon {X\to B}$ in $\C$, together with a chosen section $s\colon {B\to X}$, so that $x\comp s=1_{B}$. So a point is a bundle $x$ together with a global section $s$ of it. Given two points $(x\colon {X\to B},s\colon {B\to X})$ and $(y\colon {Y\to B},t\colon {B\to Y})$ over $B$, a morphism between them is an arrow $f\colon{X\to Y}$ in $\C$ satisfying $y\comp f=x$ and $f\comp s=t$.

Change of base is done as for slice categories: since sections are preserved, given any morphism $a\colon {A\to B}$ in a finitely complete category $\C$, we obtain a functor 
\[
a^{*}\colon{\Pt_{B}(\C)\to \Pt_{A}(\C)}.
\]

\subsection{Protomodular and semi-abelian categories} 
A finitely complete category $\C$ is said to be \defn{Bourn protomodular}~\cite{Bourn1991,Bourn2001,Borceux-Bourn} when each of the change-of-base functors $a^{*}\colon{\Pt_{B}(\C)\to \Pt_{A}(\C)}$ reflects isomorphisms. If $\C$ is a pointed category, then this condition may be reduced to the special case where $A$ is the zero object and $a=\gnab _{B}\colon{0\to B}$ is the unique morphism. The pullback functor $\gnab _{B}^{*}\colon{\Pt_{B}(\C)\to \C}$ then sends a split epimorphism to its kernel. Hence, protomodularity means that the \defn{Split Short Five Lemma} holds: suppose that in the commutative diagram
\[
\xymatrix{K \ar[r]^-{k} \ar[d]_-{g} & X \ar[d]_-{f} \ar@<.5ex>[r]^-{x} & B \ar@<.5ex>[l]^-{s} \ar@{=}[d]\\
L \ar[r]_-{l} & Y \ar@<.5ex>[r]^-{y} & B, \ar@<.5ex>[l]^-{t}}
\]
the morphism $k$ is the kernel of $x$ and $l$ is the kernel of $y$, while $f$ is a morphism of points $(x,s)\to (y,t)$; if now $g$ is an isomorphism, then $f$ is also an isomorphism.

A pointed protomodular category which is Barr exact and has finite coproducts is called a \defn{semi-abelian} category~\cite{Janelidze-Marki-Tholen}. This concept unifies earlier attempts (including, for instance,~\cite{Huq, Gerstenhaber, Orzech}) at providing a categorical framework that extends the context of abelian categories to encompass non-additive categories of algebraic structures such as groups, Lie algebras, loops, rings, etc. In this setting, the basic lemmas of homological algebra---the \emph{$3\times 3$ Lemma}, the \emph{Short Five Lemma}, the \emph{Snake Lemma}---hold~\cite{Bourn2001,Borceux-Bourn}, and may be used to study, say, (co)homology, radical theory, or commutator theory for those non-additive structures.

In a semi-abelian category, any point $(x,s)$ with its induced kernel $k$ as above gives rise to a split extension, since $x$ is also the cokernel of $k$, so that $(k,x)$ is a short exact sequence. By the results in~\cite{Bourn-Janelidze:Semidirect}, split extensions are equivalent to so-called \emph{internal actions} by means of a semi-direct product construction. Through this equivalence, there is a unique internal action $\xi\colon{B\flat K\to K}$ such that $X\cong K\rtimes_{\xi}B$. Without going into further details, let us just mention here that the object $B\flat K$ is the kernel of the morphism $\links 1_{B}\; 0\rechts\colon {B+K\to B}$, that the functor $B\flat(-)\colon{\C\to \C}$ is part of a monad, and that an internal $B$-action is an algebra for this monad. The category~$\Pt_{B}(\C)$ is monadic over $\C$, and its equivalence with the category of~$B\flat(-)$-algebras bears witness of this fact.

\subsection{Examples}
All \emph{Higgins varieties of $\Omega$-groups}~\cite{Higgins} are semi-abelian, which means that any pointed variety of universal algebras whose theory contains a group operation is an example. In particular, we find categories of all kinds of (not necessarily associative) algebras over a ring as examples, next to the categories of groups, crossed modules, and groups of a certain nilpotency or solvability class. Other examples include the categories of Heyting semilattices, loops, compact Hausdorff groups and the dual of the category of pointed sets~\cite{Janelidze-Marki-Tholen,Borceux-Bourn}.

\subsection{Algebraic cartesian closedness and the condition \LACC}
A category with finite limits $\C$ is said to be \defn{locally algebraically cartesian closed} or \defn{(LACC)} when for every morphism $a\colon {A\to B}$ in $\C$, the change-of-base functor $a^{*}\colon{\Pt_{B}(\C)\to \Pt_{A}(\C)}$ has a right adjoint~\cite{GrayPhD}. This condition is much stronger than \defn{algebraic cartesian closedness} or \defn{(ACC)} which is the case $B=1$.

When a semi-abelian category is (locally) algebraically cartesian closed, this has some interesting consequences~\cite{Gray2012,Bourn-Gray,acc}. For one thing, \ACC\ is equivalent to the condition that every monomorphism in $\C$ admits a centraliser. The property \LACC\ implies categorical-algebraic conditions such as \emph{peri-abelianness}~\cite{Bourn-Peri}, \emph{strong protomodularity}~\cite{B1}, the \emph{Smith is Huq} condition~\cite{MFVdL}, \emph{normality of Higgins commutators}~\cite{CGrayVdL1}, and \emph{algebraic coherence}. We come back to the latter condition (which implies all the others mentioned) in detail, in Subsection~\ref{(AC)} below.

The condition \ACC\ is relatively weak, and has all \emph{Orzech categories of interest}~\cite{Orzech} for examples. In comparison, \LACC\ is very strong: as mentioned above, we have groups, Lie algebras, crossed modules, and cocommutative Hopf algebras over a field of characteristic zero as ``natural'' semi-abelian examples, next to all abelian categories. An example of a slightly different kind---because it is non-pointed---is any category of groupoids with a fixed object of objects~\cite{Bourn-Gray}.

In what follows, we shall need the following characterisation of \LACC, valid in semi-abelian varieties of universal algebras. Instead of checking that all change-of-base functors $a^{*}\colon{\Pt_{B}(\V)\to \Pt_{A}(\V)}$ have a right adjoint, it suffices to check that \emph{some} change-of-base functors preserve binary sums. 

\begin{theorem}\label{(LACC) via flat sum}
For a semi-abelian variety of universal algebras $\V$, the following are equivalent:
\begin{tfae}
\item $\V$ is locally algebraically cartesian closed;
\item for all $B$ in $\V$, the pullback functor $\gnab_{B}^{*}\colon{\Pt_{B}(\V)\to \V}$ preserves all colimits;
\item for all $B$ in $\V$, the functor $\gnab_{B}^{*}$ preserves binary sums;
\item the canonical comparison $
\links B\flat \iota_{X}\; B\flat \iota_{Y}\rechts \colon B\flat X + B\flat Y \to B\flat (X+Y)$ is an isomorphism for all $B$, $X$ and $Y$ in $\V$.
\end{tfae}
\end{theorem}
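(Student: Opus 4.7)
The plan is to prove the cycle (i) $\To$ (ii) $\To$ (iii) $\To$ (iv) $\To$ (i); the first three implications are essentially formal and the final one carries the real content.

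For (i) $\To$ (ii), the functor $\gnab_{B}^{*}$ is a left adjoint by assumption, so it preserves all colimits. The step (ii) $\To$ (iii) is immediate, since binary sums are a particular kind of colimit.

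For (iii) $\To$ (iv), the key observation is that $\gnab_{B}^{*}\colon \Pt_{B}(\V)\to \V$ always admits a left adjoint $F\colon \V\to \Pt_{B}(\V)$ which sends an object $X$ to the point $(\links 1_{B}\;0\rechts \colon B+X\to B,\;\iota_{B})$. By construction the kernel of this split epimorphism is $B\flat X$, so $\gnab_{B}^{*}\comp F = B\flat(-)$. Since $F$ is a left adjoint, $F(X+Y)\cong FX+FY$ in $\Pt_{B}(\V)$; applying $\gnab_{B}^{*}$ and using (iii) yields the isomorphism
\[
B\flat(X+Y) \;=\; \gnab_{B}^{*} F(X+Y) \;\cong\; \gnab_{B}^{*} FX + \gnab_{B}^{*} FY \;=\; B\flat X + B\flat Y,
\]
which on coprojections coincides with the canonical comparison map of (iv).

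The substantive step is (iv) $\To$ (i), where I would exploit the monadicity of $\gnab_{B}^{*}$: the category $\Pt_{B}(\V)$ is monadic over $\V$ via the monad $T=B\flat(-)$ recalled in the excerpt, and condition (iv) says precisely that the underlying endofunctor of $T$ preserves binary sums. The coproduct of free $T$-algebras is then computed as $T(X+Y)$, which exhibits sum preservation by $\gnab_{B}^{*}$ on free objects; since every $T$-algebra is a reflexive coequaliser of free ones, and $\gnab_{B}^{*}$ preserves reflexive coequalisers as a monadic functor whose monad does so in the semi-abelian variety $\V$, sum preservation extends to all of $\Pt_{B}(\V)$. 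Combined with preservation of the initial object (trivial, as the kernel of $1_{B}$ is zero), this gives preservation of all small colimits by $\gnab_{B}^{*}$, and the adjoint functor theorem for locally finitely presentable categories then produces a right adjoint, yielding \ACC\ at every base $B$. To upgrade this to the full condition \LACC, one argues that a general change-of-base functor $a^{*}\colon \Pt_{B}(\V)\to \Pt_{A}(\V)$ fits into compatible monadicity squares above $\gnab_{B}^{*}$ and $\gnab_{A}^{*}$, so that the same reflexive-coequaliser-plus-sum-preservation analysis transports verbatim to $a^{*}$. The main difficulty lies in this last transport step: the technical bookkeeping required to lift sum preservation simultaneously from each $\gnab_{B}^{*}$ to every change-of-base functor $a^{*}$.
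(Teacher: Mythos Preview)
The paper does not argue this theorem directly: its proof is a one-line citation of Theorem~2.9, Theorem~5.1 and Proposition~6.1 in~\cite{Gray2012}. Your proposal is therefore a reconstruction of Gray's arguments rather than something to compare against the paper's own reasoning, and as such it is on the right track. The implications (i)~$\Rightarrow$~(ii)~$\Rightarrow$~(iii)~$\Rightarrow$~(iv) are correct as you state them, and your monadic strategy for (iv)~$\Rightarrow$~(i) is indeed the one used in~\cite{Gray2012}. Two remarks are in order. First, the assertion that $\gnab_B^{*}$ preserves reflexive coequalisers deserves a word of justification: since $B\flat(-)$ preserves filtered colimits (kernels are finite limits), the monad is finitary and $\Pt_B(\V)$ is again a variety, so reflexive coequalisers on both sides are computed on underlying sets; alternatively, one checks directly that kernels of split epimorphisms are stable under regular epimorphisms in any regular category, because the relevant square is a pullback.

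Second, and more importantly, the step you flag as the ``main difficulty''---upgrading from right adjoints for every $\gnab_B^{*}$ to right adjoints for every $a^{*}$---is in fact a two-line argument, not a delicate transport. Pasting of pullbacks gives $\gnab_A^{*}\comp a^{*}=\gnab_B^{*}$. Once you know that every $\gnab_B^{*}$ preserves colimits, the comparison map $\mathrm{colim}(a^{*}D)\to a^{*}(\mathrm{colim}\,D)$ becomes an isomorphism after applying $\gnab_A^{*}$; since $\gnab_A^{*}$ is monadic and hence conservative, the comparison was already an isomorphism, so $a^{*}$ preserves colimits and the adjoint functor theorem applies. There is no need for the ``compatible monadicity squares'' bookkeeping you anticipate.
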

\begin{proof}
This combines Theorem~2.9, Theorem~5.1 and Proposition 6.1 in~\cite{Gray2012}.
\end{proof}

Via the equivalence between split extensions and internal actions, condition (ii) means that the forgetful functor from the category of $B$-actions in $\V$ to $\V$ preserves all colimits. Hence in this varietal context, \LACC\ amounts to the property that colimits in the category of internal $B$-actions in $\V$ are independent of the acting object $B$, and computed in the base category $\V$.

\subsection{Algebraic coherence}\label{(AC)}
The concept of an \defn{algebraically coherent} category was introduced in~\cite{acc} with the aim in mind of having a condition with strong categorical-algebraic consequences such as the ones mentioned above for \LACC, while at the same time keeping all \emph{Orzech categories of interest} as examples. It is to \emph{coherence} in the sense of topos theory~\cite[Section~A1.4]{Johnstone:Elephant} what \emph{algebraic cartesian closedness} is to \emph{cartesian closedness}: a condition involving slice categories has been replaced by a condition in terms of categories of points.

The formal definition is that all change-of-base functors $a^{*}\colon{\Pt_{B}(\C)\to \Pt_{A}(\C)}$ preserve jointly strongly epimorphic pairs of arrows. This is clearly weaker than asking that the $a^{*}$ preserve all colimits. We shall only need the following characterisation, which is essentially Theorem~3.18 in~\cite{acc}: algebraic coherence is equivalent to the condition that for all $B$, $X$ and $Y$, the canonical comparison
\begin{equation*}
\links B\flat \iota_{X}\; B\flat \iota_{Y}\rechts \colon B\flat X + B\flat Y \to B\flat (X+Y)
\end{equation*}
from Theorem~\ref{(LACC) via flat sum} is a regular epimorphism. 

Algebraic coherence has somewhat better stability properties than \LACC. For instance, any subvariety of a semi-abelian algebraically coherent variety is still algebraically coherent. We shall come back to this in the next section.

Some examples of semi-abelian varieties which are \emph{not} algebraically coherent are the varieties of loops, Heyting semilattices, and non-associative algebras (the category $\Alg_{\K}$ defined below).

\section{Main result}\label{Main result}
The aim of this section is to prove Theorem~\ref{Theorem Alt then Lie}, which says that any \LACC\ variety of anti-commutative algebras over an infinite field $\K$ is a category of Lie algebras over $\K$. On the way we fully characterise algebraically coherent varieties of anti-commutative algebras (Theorem~\ref{Theorem AC Alt means Lie or AAAlg}). This is an application of a more general result telling us that a variety of $\K$-algebras is algebraically coherent if and only if it is an \emph{Orzech category of interest} (Theorem~\ref{Theorem AC iff Orzech}).

\subsection{Categories of algebras and their subvarieties}
Let $\K$ be a field. A \defn{(non-associative) algebra $A$ over $\K$} is a $\K$-vector space equipped with a bilinear operation $[-,-]\colon {A\times A\to A}$, so a linear map ${A\tensor A\to A}$. We use the notations $[x,y]=x\cdot y=xy$ depending on the situation at hand, always keeping in mind that the multiplication need not be associative. We write $\Alg_{\K}$ for the category of algebras over~$\K$ with product-preserving linear maps between them. It is a semi-abelian category which is not algebraically coherent. A \defn{subvariety} of~$\Alg_{\K}$ is any equationally defined class of algebras, considered as a full subcategory~$\V$ of~$\Alg_{\K}$. 

The category of \defn{associative algebras} over $\K$ is the subvariety of~$\Alg_{\K}$ satisfying $x(yz)=(xy)z$. 

The category $\Alt_{\K}$ of \defn{anti-commutative algebras} over $\K$ is the subvariety of $\Alg_{\K}$ satisfying $xx=0$. If the characteristic of the field $\K$ is different from $2$, then this is easily seen to be equivalent to the condition $xy=-yx$, whence the name ``anti-commutative''.

The category $\AAAlg_{\K}$ of \defn{anti-associative algebras} over $\K$ is the subvariety of~$\Alg_{\K}$ satisfying $x(yz)=-(xy)z$. 

The category $\Lie_{\K}$ of \defn{Lie algebras} over $\K$ is the subvariety of anti-commutative algebras satisfying the \defn{Jacobi identity} $x(yz)+z(xy)+ y(zx)=0$.

An algebra is \defn{abelian} when it satisfies $xy=0$. The subvariety of $\Alg_{\K}$ determined by the abelian algebras is isomorphic to the category $\Vect_{\K}$ of vector spaces over $\K$. An algebra $A$ is abelian if and only if $+\colon{A\times A\to A}$ is an algebra morphism, which makes $(A,+,0)$ an internal abelian group, so an \defn{abelian object} in the sense of~\cite{Borceux-Bourn}.

\subsection{Algebras over infinite fields}
We assume that the field $\K$ is infinite, so that we can use the following result (Theorem~\ref{Theorem Homogeneity}, which is Corollary~2 on page~8 of~\cite{Shestakov}). We first fix some terminology. For a given set $S$, a \defn{polynomial} with variables in $S$ is an element of the free $\K$-algebra on $S$. Recall that the left adjoint ${\Set\to \Alg_{\K}}$ factors as a composite of the \emph{free magma} functor $M\colon{\Set\to \Mag}$ with the \emph{magma algebra} functor $\K[-]\colon {\Mag\to \Alg_{\K}}$. The elements of $M(S)$ are non-associative words in the alphabet $S$, and the elements of $\K[M(S)]$, the polynomials, are $\K$-linear combinations of such words. A \defn{monomial} in $\K[M(S)]$ is any scalar multiple of an element of~$M(S)$. The \defn{type} of a monomial $\varphi(x_{1},\dots,x_{n})$ is the element $(k_{1},\dots,k_{n})\in \N^{n}$ where $k_{i}$ is the degree of $x_{i}$ in $\varphi(x_{1},\dots,x_{n})$. A polynomial is \defn{homogeneous} if its monomials are all of the same type. Any polynomial may thus be written as a sum of homogeneous polynomials, which are called its \defn{homogeneous components}.

\begin{theorem}\label{Theorem Homogeneity}\cite{Shestakov}
If $\V$ is a variety of algebras over an infinite field, then all of its identities are of the form $\phi(x_{1},\dots,x_{n})=0$, where $\phi(x_{1},\dots,x_{n})$ is a (non-associative) polynomial, each of whose homogeneous components $\psi(x_{i_{1}},\dots, x_{i_{m}})$ again gives rise to an identity $\psi(x_{i_{1}},\dots, x_{i_{m}})=0$ in $\V$. \noproof
\end{theorem}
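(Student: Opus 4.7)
The plan is a standard Vandermonde-type argument, iterated over the variables. Suppose $\phi(x_{1},\ldots,x_{n})=0$ is a law in $\V$. I would first single out one variable, say $x_{1}$, and decompose $\phi$ according to its degree in $x_{1}$: write $\phi=\sum_{k=0}^{N}\phi_{k}$, where $\phi_{k}$ collects the monomials of $\phi$ of degree exactly $k$ in $x_{1}$, and $N$ is the maximal $x_{1}$-degree occurring in $\phi$. The statement then reduces, by induction on the number of variables, to the claim that each $\phi_{k}$ is itself a law in $\V$.

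To establish that claim, note that substituting any algebra element for a variable preserves laws: for every scalar $\lambda\in\K$, the polynomial $\phi(\lambda x_{1},x_{2},\ldots,x_{n})=\sum_{k=0}^{N}\lambda^{k}\phi_{k}(x_{1},\ldots,x_{n})$ vanishes on every algebra of $\V$. Since $\K$ is infinite, I can pick $N+1$ pairwise distinct scalars $\lambda_{0},\ldots,\lambda_{N}$; the resulting Vandermonde matrix $(\lambda_{j}^{k})_{0\leq j,k\leq N}$ is invertible over $\K$. Taking the suitable $\K$-linear combinations of the $N+1$ laws $\phi(\lambda_{j}x_{1},x_{2},\ldots,x_{n})=0$ then isolates each individual $\phi_{k}$, so that $\phi_{k}=0$ is itself a law in $\V$.

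Iterating this decomposition on each of the remaining variables $x_{2},\ldots,x_{n}$ in turn refines every $\phi_{k}$ further, until every surviving summand has a prescribed degree in each variable separately---that is, until it is a homogeneous component of $\phi$ in the sense of the statement. Every summand obtained along the way remains a law in $\V$, so in particular so does every homogeneous component of $\phi$.

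The main obstacle, and the reason for the hypothesis that $\K$ be infinite, is precisely the availability of enough distinct scalars for the Vandermonde coefficient matrix to be invertible; over a finite field this breaks down and the conclusion can fail. Once this linear-algebra input is granted, the remainder of the argument is purely combinatorial bookkeeping on polynomial degrees and involves no further interaction with the (non-associative) multiplication on the algebra.
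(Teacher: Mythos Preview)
Your argument is correct and is precisely the standard proof of this result. Note, however, that the paper does not supply its own proof: the theorem is stated with a citation to \cite{Shestakov} (Corollary~2 on page~8) and marked \emph{no proof}. Your Vandermonde-type argument is in fact the argument given in that reference, so there is nothing to compare.
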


\subsection{Description of $B\flat X$ in $\Alg_{\K}$}
Let $B$ and $X$ be free $\K$-algebras. Then the object $B\flat X$, being the kernel of the morphism $\links 1_{B}\;0\rechts\colon B+X\to B$, consists of those polynomials with variables in $B$ and in $X$ which can be written in a form where all of their monomials contain variables in $X$. For instance, given $b$, $b'\in B$ and $x\in X$, the expression $(b(xx))b'$ is allowed, but~$bb'$ is not.

\subsection{The reflection to a subvariety $\V$ of $\Alg_{\K}$}
Let $B$ and $X$ be free $\K$-algebras. We write $\overline{B}$ and $\overline{X}$ for their respective reflections into $\V$, which are free $\V$-algebras. These induce short exact sequences in $\Alg_{\K}$ such as 
\[
\xymatrix{0 \ar[r] & [X] \ar[r] & X \ar[r]^-{\eta_{X}} & \overline{X} \ar[r] & 0}
\]
where $\eta_{X}$ is the unit at $X$ of the reflection from $\Alg_{\K}$ to $\V$. We never write the right adjoint inclusion, but note that it preserves all limits. The kernel $[X]$ is a kind of relative commutator; its elements are precisely those polynomials $\phi(x_1,\dots,x_n)\in X$ where $x_1$, \dots, $x_n$ are in the set of generators of $X$ and $\phi(x_1,\dots,x_n)=0$ is an identity of~$\V$. Reflecting sums now, then taking kernels to the left, we obtain horizontal split exact sequences
\[
\xymatrix{0 \ar[r] & (B\flat X)\cap [B+X] \ar@{.>}[d] \dottedpullback
 \ar[r] & [B+X] \ar[d] \ar@<.5ex>[r] & [B] \ar[d] \ar[r] \ar@<.5ex>[l] & 0\\
0 \ar[r] & B\flat X \ar@{.>}[d]_-{\rho_{B,X}} \ar[r] & B+X \ar[d]_-{\eta_{B+X}} \ar@<.5ex>[r]^-{\links 1_{B}\;0\rechts} & B \ar[d]^-{\eta_{B}} \ar[r] \ar@<.5ex>[l] & 0\\
0 \ar[r] & \overline{B}\flat_{\V} \overline{X} \ar[r] & \overline{B}+_{\V}\overline{X} \ar@<.5ex>[r]^-{\links 1_{\overline{B}}\;0\rechts} & \overline{B} \ar[r] \ar@<.5ex>[l] & 0}
\] 
in $\Alg_{\K}$, where the sum $\overline{B}+_{\V} \overline{X}\cong \overline{B+X}$ is taken in $\V$. Using, for instance, the ${3\times 3}$~Lemma, it is not difficult to see that the induced dotted arrow $\rho_{B,X}$ is a surjective algebra homomorphism. In fact, the upper left square is a pullback, and we have three vertical short exact sequences. The one on the left allows us to view the elements of $\overline{B}\flat_{\V} \overline{X}$ as polynomials in $B\flat X$, modulo those identities which hold in~$\V$ that are expressible in $B\flat X$. In particular, an element $\phi(x_1,\dots,x_n,b_1,\dots, b_m)$ of~$B\flat X$ belongs to the top left intersection if and only if $\phi(x_1,\dots,x_n,b_1,\dots, b_m)=0$ is an identity of~$\V$. We freely use this interpretation in what follows, abusing terminology and notation by making no distinction between the equivalence class of polynomials that is an element in the quotient $\overline{B}\flat_{\V} \overline{X}$, and an element in $B\flat X$ which represents it.

\subsection{Subvarieties of $\Lie_{\K}$ need not be \LACC} 
Subvarieties of locally algebraically cartesian closed categories need no longer be such: we may take the variety of Lie algebras that satisfy $x(yz)=0$ as an example. 

\begin{proposition}\label{Proposition Nil_2}
Let $\V$ be a variety of non-associative algebras in which $x(yz)=0$ is an identity. If $\V$ is locally algebraically cartesian closed, then it is abelian.
\end{proposition}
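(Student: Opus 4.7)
The plan is to invoke Theorem~\ref{(LACC) via flat sum}(iv): $\V$ is \LACC{} if and only if the canonical comparison
\[
\langle B\flat\iota_X,\, B\flat\iota_Y\rangle\colon B\flat X + B\flat Y \to B\flat(X+Y)
\]
is an isomorphism for every $B$, $X$, $Y$ in $\V$. I will exhibit a non-zero element in the kernel of this map whenever $\V$ is not abelian, thereby forcing the law $xy=0$ in~$\V$.

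Take $B$, $X$, $Y$ to be the free $\V$-algebras on single generators $b$, $x$, $y$. The element $by := b\cdot y$ lies in $B\flat Y$ (it vanishes under $B+Y\twoheadrightarrow B$). In the coproduct $B\flat X + B\flat Y$ form
\[
\xi\;:=\;\iota_{B\flat X}(x)\cdot\iota_{B\flat Y}(by).
\]
The canonical comparison sends $\xi$ to $x\cdot(b\cdot y)$ inside $B\flat(X+Y)\subseteq B+X+Y$, and this vanishes by $x(yz)=0$ (with $y:=b$, $z:=y$). Hence injectivity will fail as soon as $\xi\neq 0$ in the coproduct.

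The key step is to show $\xi\neq 0$ whenever $\V$ is not abelian. A $y$-degree count gives the essential observation: every non-zero element of $B\flat Y$ has $y$-degree at least $1$ in each monomial, so every product $u\cdot v$ of elements of $B\flat Y$ has $y$-degree at least~$2$. Since $by$ has $y$-degree exactly $1$, it is not a product in $B\flat Y$; it is an ``atomic'' generator, and remains so in the coproduct. Consequently the law $x(yz)=0$ cannot by itself collapse $\xi$ to zero there, because the factor $by$ cannot be written as $y'\cdot z'$. To upgrade this formal obstruction to a genuine non-vanishing, I construct a test $\V$-algebra $A$ together with $\V$-homomorphisms $f\colon B\flat X\to A$ and $g\colon B\flat Y\to A$ with $f(x)\cdot g(by)\neq 0$; the universal property of the coproduct then yields $\xi\neq 0$. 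A suitable $A$ is the quotient of the free $\V$-algebra on two generators $\alpha,\beta$ by the ideal generated by $\beta^2$: the relation $\beta^2=0$ matches $by\cdot by=0$ in $B\flat Y$, while $\alpha\cdot\beta$ survives this quotient precisely because $\V$ is not abelian.

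The main technical obstacle is the construction and verification of $f$ and $g$: one sends $x\mapsto\alpha$ and $by\mapsto\beta$, extends multiplicatively along the sub-algebras of $B\flat X$ and $B\flat Y$ generated by these elements, and sends every other basis generator to $0$. Consistency with the algebra structures of $B\flat X$ and $B\flat Y$ holds because each relevant product either lies in the chosen sub-algebra (where multiplicativity dictates its image) or is already forced to zero by $x(yz)=0$ in $B+X$ or $B+Y$. Once $f$ and $g$ are in place, $\xi$ provides the required non-zero kernel element, contradicting \LACC. Hence $xy=0$ must be a law in $\V$, so $\V$ is abelian.
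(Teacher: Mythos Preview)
Your approach is essentially the paper's: you locate the same obstruction $x\cdot(by)$ (the paper writes $x\cdot yb_{2}$) in the kernel of the comparison map and argue it is nonzero when $\V$ is not abelian. The paper reasons directly that $yb_{2}$ is indecomposable in $B\flat Y$ and then invokes Theorem~\ref{Theorem Homogeneity} to conclude that $x\cdot yb_{2}$ can only vanish in the coproduct via a degree-two law; your test-algebra $A$ is a more explicit device for certifying the same non-vanishing, but the underlying idea is identical.

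There is, however, a slip in your justification of why $g\colon B\flat Y\to A$ is a homomorphism. You claim that every product in $B\flat Y$ ``either lies in the chosen sub-algebra or is already forced to zero by $x(yz)=0$''. This is false: for instance $(yb)\cdot y$ is neither in $\K(by)$ nor killed by the law, since it has the shape $(uv)w$ rather than $u(vw)$. The correct argument is precisely the $y$-degree observation you had already made. Since the image of $g$ lies in $\K\beta$ with $\beta^{2}=0$, the map $g$ is a homomorphism if and only if it annihilates every product in $B\flat Y$; equivalently, $g$ factors through the abelianisation $(B\flat Y)/(B\flat Y)^{2}$. All elements of $(B\flat Y)^{2}$ have $y$-degree at least $2$, so $by\notin(B\flat Y)^{2}$ whenever $by\neq 0$, and a linear map sending the class of $by$ to $\beta$ exists. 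Note that the very existence of a $y$-grading on $B+Y$ (and hence the legitimacy of the degree count) is exactly where the infinite-field hypothesis enters, via Theorem~\ref{Theorem Homogeneity}; you should make that dependence explicit. With this repair the argument goes through.
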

\begin{proof}
Let $B$, $X$ and $Y$ be free algebras in $\V$, respectively generated by their elements $b$, $x$ and~$y$. Via the composite adjunction
\[
\xymatrix{\Set \ar@<1ex>[r]^-{\text{Free}} \ar@{}[r]|-{\perp} & \V \ar@<1ex>[r]^-{B+(-)} \ar@{}[r]|-{\perp} \ar@<1ex>[l]^-{\text{Forget}} & \Pt_{B}(\V),\ar@<1ex>[l]^-{\gnab^*_B}}
\]
the split epimorphisms
\[
(\links 1_{B}\;0\rechts\colon B+X\to B,\quad \iota_{B}\colon B\to B+X)
\]
and
\[
(\links 1_{B}\;0\rechts\colon B+Y\to B,\quad \iota_{B}\colon B\to B+Y)
\]
correspond to the free $B$-actions respectively generated by $x$ and $y$. Their sum in~$\Pt_{B}(\V)$ is
\[
(\links 1_{B}\;0\;0\rechts\colon B+X+Y\to B,\quad \iota_{B}\colon B\to B+X+Y).
\]
Applying the kernel functor, \LACC\ tells us that the canonical morphism
\begin{equation*}
\links B\flat \iota_{X}\; B\flat \iota_{Y}\rechts \colon B\flat X + B\flat Y \to B\flat (X+Y)
\end{equation*}
is an isomorphism (Theorem~\ref{(LACC) via flat sum}). When considering the sum $B\flat X + B\flat Y$, we write $b_{1}$ and $b_{2}$ for the generators of the two distinct copies of~$B$; then $\links B\flat \iota_{X}\; B\flat \iota_{Y}\rechts$ maps the $b_{i}$ to~$b$, sends $x$ to~$x$ and~$y$ to~$y$.

Now $x\in B\flat X$ and $yb_{2}\in B\flat Y$ are such that $x\cdot yb_{2}$ is sent to zero by the above isomorphism, since the identity $x(yz)=0$ holds in $\V$, so that $x\cdot yb$ is zero in $B+X+Y$, of which $B\flat(X+Y)$ is a subobject. As a consequence, $x\cdot yb_{2}$ is zero in the sum $B\flat X + B\flat Y$. Recall that $b_{2}\not\in B\flat Y$, so that $yb_{2}$ cannot be decomposed as a product of $y$ and $b_{2}$ in~$B\flat Y$; and it cannot be written as a product in which more than one $y$ or $b_{2}$ appears either, since by Theorem~\ref{Theorem Homogeneity} we may assume that all identities in $\V$ are homogeneous. Hence $yb_2$ is not a product, so that $x\cdot yb_{2}$ can only be zero if either $yb_{2}$ is zero in~$B\flat Y$, or $xz=0$ is an identity in $\V$. In the former case, $yb_{2}$ is zero in the sum $B+Y$, which is a free algebra on $\{b_{2},y\}$; then $yz=0$ is an identity in~$\V$. In either case, $\V$ is abelian.
\end{proof}

\subsection{(Anti-)associative algebras}
Essentially the same argument gives us two further examples, which we shall need later on:

\begin{proposition}\label{Proposition AAAlg}
If a variety of either associative or anti-associative algebras is locally algebraically cartesian closed, then it is abelian.
\end{proposition}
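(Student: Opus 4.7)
The plan is to mirror the argument of Proposition~\ref{Proposition Nil_2}, exploiting \LACC\ to force the law $x(yz)=0$ in $\V$ and then to invoke Proposition~\ref{Proposition Nil_2} itself.

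Let $\V$ be a locally algebraically cartesian closed variety of associative or antiassociative algebras, and let $B$, $X$, $Y$ be the free $\V$-algebras on single generators $b$, $x$, $y$. By Theorem~\ref{(LACC) via flat sum}, the canonical comparison
\[
\rho = \links B\flat\iota_X\; B\flat\iota_Y\rechts \colon B\flat X + B\flat Y \to B\flat(X+Y)
\]
is an isomorphism. Writing $b_1$ and $b_2$ for the generators of the two copies of $B$ inside the coproduct, I form the two elements
\[
u=(xb_1)\cdot y, \qquad v = x\cdot (b_2 y)
\]
of $B\flat X + B\flat Y$, noting that $xb_1,x\in B\flat X$ and $b_2 y,y\in B\flat Y$. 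Under $\rho$, they map respectively to $(xb)y$ and $x(by)$ in $B\flat(X+Y)\subseteq B+X+Y$.

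In the associative case these two images coincide, so $u-v$ lies in the kernel of $\rho$; in the antiassociative case they are negatives of one another, so $u+v$ does. Since $\rho$ is an isomorphism, in both cases I obtain a relation of the form $u=\pm v$ in $B\flat X+B\flat Y$. Pushing this relation forward along the natural morphism $B\flat X + B\flat Y\to (B+X)+(B+Y)$ induced by the kernel inclusions, and using that $(B+X)+(B+Y)$ is the free $\V$-algebra on four generators $b_1,x,b_2,y$, (anti)associativity rewrites both images as $\pm xb_1 y$ and $\pm xb_2 y$. Either way, this yields the four-variable law $xb_1 y = xb_2 y$ in $\V$; specialising $b_2\mapsto 0$ then produces the three-variable law $xby=0$, equivalently $x(yz)=0$.

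Applying Proposition~\ref{Proposition Nil_2} finishes the argument: a \LACC\ variety of algebras in which $x(yz)=0$ holds must be abelian. The main obstacle will be the careful bookkeeping with the two copies of~$B$, and in particular verifying that the relation derived in the coproduct $B\flat X+B\flat Y$ truly descends to a genuine identity in $F_\V(b_1,x,b_2,y)$ (no injectivity is required, but one must confirm that the images of $u$ and $v$ are correctly computed using (anti)associativity), so that the substitution $b_2\mapsto 0$ legitimately extracts a law of $\V$.
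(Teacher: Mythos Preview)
Your proposal is correct and uses exactly the same pair of witnesses as the paper: the elements $(xb_{1})\cdot y$ and $x\cdot(b_{2}y)$ in $B\flat X+B\flat Y$, whose images in $B\flat(X+Y)$ agree up to sign by (anti)associativity. The only difference is in how the conclusion is extracted: the paper simply asserts that these two elements are distinct in the domain (implicitly, unless $\V$ is abelian), whereas you push the resulting equality forward to the free $\V$-algebra on four generators, read off the identity $x(b_{1}y)=x(b_{2}y)$, specialise $b_{2}\mapsto 0$ to obtain $x(yz)=0$, and then finish via Proposition~\ref{Proposition Nil_2}. Your route is slightly longer but more explicit, and the detour through Proposition~\ref{Proposition Nil_2} is a clean way to avoid having to argue directly about distinctness inside $B\flat X+B\flat Y$; note also that one could equally well invoke Theorem~\ref{Theorem Homogeneity} on the identity $x(b_{1}y)-x(b_{2}y)=0$ to isolate the homogeneous component $x(b_{1}y)=0$ instead of substituting $b_{2}\mapsto 0$.
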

\begin{proof}
In the anti-associative case we have $x$, $-xb_{1}\in B\flat X$ and $b_{2}y$, $y\in B\flat Y$ such that $x\cdot b_{2}y$ and $-xb_{1}\cdot y$ are sent to the same element in $B\flat (X+Y)$ by the above isomorphism $\links B\flat \iota_{X}\; B\flat \iota_{Y}\rechts\colon B\flat X + B\flat Y \to B\flat (X+Y)$.

Similarly, in the associative case, we see that $xb_{1}\cdot y$ and $x\cdot b_{2}y$ are two distinct elements of the sum ${B\flat X+B\flat Y}$ which the morphism $\links B\flat \iota_{X}\; B\flat \iota_{Y}\rechts$ sends to one and the same element of ${B\flat(X+Y)}$.
\end{proof}

\begin{lemma}\label{Lemma anti-associative}
Any variety of anti-commutative $\K$-algebras that satisfies the identity $x(xy)=0$ is a subvariety of~$\AAAlg_{\K}$.
\end{lemma}
\begin{proof}
Taking $x=a+b$ and $y=c$ gives us
\[
0=(a+b)((a+b)c)=a(ac)+b(ac)+a(bc)+b(bc)=b(ac)+a(bc)
\]
so that $a(bc)=-b(ac)$. It follows that $(uv)w=-w(uv)=u(wv)=-u(vw)$ is an identity in $\V$, and $\V$ is a variety of anti-associative algebras.
\end{proof}

\subsection{Algebraic coherence}
Theorem~\ref{Theorem Homogeneity} gives us a characterisation of algebraic coherence for varieties of $\K$-algebras.

\begin{theorem}\label{Theorem AC iff Orzech}
Let $\K$ be an infinite field. If $\V$ is a variety of non-associative $\K$-algebras, then the following are equivalent:
\begin{tfae}
\item $\V$ is algebraically coherent;
\item there exist $\lambda_{1}$, \dots, $\lambda_{16}$ in $\K$ such that
\begin{align*}
z(xy)=
\lambda_{1}y(zx)&+\lambda_{2}x(yz)+
\lambda_{3}y(xz)+\lambda_{4}x(zy)\\
&+\lambda_{5}(zx)y+\lambda_{6}(yz)x+
\lambda_{7}(xz)y+\lambda_{8}(zy)x
\end{align*}
and 
\begin{align*}
	(xy)z=
	\lambda_{9}y(zx)&+\lambda_{10}x(yz)+
	\lambda_{11}y(xz)+\lambda_{12}x(zy)\\
	&+\lambda_{13}(zx)y+\lambda_{14}(yz)x+
	\lambda_{15}(xz)y+\lambda_{16}(zy)x
\end{align*}
are identities in $\V$;
\item $\V$ is a \defn{$2$-variety} in the sense of~\cite{Zwier}: for any ideal $I$ of an algebra $A$, the subalgebra $I^2$ of $A$ is again an ideal;
\item $\V$ is an \emph{Orzech category of interest}~\cite{Orzech}.
\end{tfae}
\end{theorem}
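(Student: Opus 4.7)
The plan is to handle the three implications separately. The directions (iii) $\Rightarrow$ (i) and (ii) $\Rightarrow$ (iii) should be quick: the first is the known fact, proved in~\cite{acc}, that every Orzech category of interest is algebraically coherent; the second is a matter of verifying Orzech's axioms in the special case of varieties of $\K$-algebras, where the only non-trivial binary operation is the product $(x,y)\mapsto xy$ and all other Orzech conditions reduce to vector-space bilinearity, so that (ii) is precisely the remaining substantive axiom. The heart of the theorem is therefore the implication (i) $\Rightarrow$ (ii).

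For this, I would take $B$, $X$ and $Y$ to be the free $\V$-algebras on the singletons $\{z\}$, $\{x\}$ and $\{y\}$, so that $B+X+Y$ is the free $\V$-algebra on three generators and any equation between two of its elements is automatically a law in $\V$. Applying the regular-epimorphism characterisation of algebraic coherence recalled in Subsection~\ref{(AC)}, the canonical comparison
\[
\links B\flat \iota_{X}\; B\flat \iota_{Y}\rechts\colon B\flat X + B\flat Y \to B\flat(X+Y)
\]
is surjective, so the element $z(xy)\in B\flat(X+Y)$ is the image of some $w \in B\flat X + B\flat Y$.

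What remains is a homogeneity-and-enumeration step. By Theorem~\ref{Theorem Homogeneity}, applied to the law ``$z(xy)$ equals the image of $w$'' in the free $\V$-algebra $B+X+Y$, I may replace $w$ by the multi-degree $(1,1,1)$ component of its image in $B+X+Y$ in the variables $(z,x,y)$. The key combinatorial observation is that this component lies in the $\K$-span of the eight monomials
\[
y(zx),\ x(yz),\ y(xz),\ x(zy),\ (zx)y,\ (yz)x,\ (xz)y,\ (zy)x,
\]
because each factor coming from $B\flat X$ has $x$-degree at least one and $y$-degree zero, while each factor from $B\flat Y$ has $y$-degree at least one and $x$-degree zero; hence to reach total multi-degree $(1,1,1)$ one must use exactly two factors, with bidegrees $(1,1,0)$ and $(0,0,1)$ or $(0,1,0)$ and $(1,0,1)$, while any longer product immediately exceeds the allowed degree in $x$ or in $y$. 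The resulting identity in $B+X+Y$ is then a law of $\V$ of the form~(ii).

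The main technical obstacle I expect is precisely this enumeration: one must rule out the possibility that the image of $w$ contains monomials of nominally higher multi-degree which only become indistinguishable from multi-degree-$(1,1,1)$ ones once the laws of $\V$ are imposed. Theorem~\ref{Theorem Homogeneity}, which crucially uses the hypothesis that $\K$ is infinite, is what allows me to isolate the $(1,1,1)$ component in a way that respects the laws of $\V$; without that tool, the bookkeeping would not close.
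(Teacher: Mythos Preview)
Your proposal is correct and follows essentially the same route as the paper: the implications (iii)~$\Rightarrow$~(i) and (ii)~$\Rightarrow$~(iii) are dispatched exactly as the paper does, and for (i)~$\Rightarrow$~(ii) both arguments choose $B$, $X$, $Y$ free on one generator, lift $z(xy)$ through the surjection $\links B\flat\iota_{X}\;B\flat\iota_{Y}\rechts$, and then invoke Theorem~\ref{Theorem Homogeneity} to isolate the multidegree-$(1,1,1)$ part. The only cosmetic difference is that where the paper excludes the four forbidden monomials by noting directly that no product of a bare $b_{i}$ with $xy$ or $yx$ can occur in $B\flat X+B\flat Y$, you reach the same list of eight admissible monomials by a degree count on the factors coming from each summand; the two observations are equivalent.
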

\begin{proof}
From the results of~\cite{acc} we already know that (iv) implies (i). It follows immediately from the definition of an \emph{Orzech category of interest} that (ii) implies~(iv). The equivalence between (ii) and (iii) is well known~\cite{Anderson}. To see that (i) implies (ii), we take free $B$-actions as in the first part of the proof of Proposition~\ref{Proposition Nil_2} and obtain the regular epimorphism 
\[
\links B\flat \iota_{X}\; B\flat \iota_{Y}\rechts \colon B\flat X + B\flat Y \to B\flat (X+Y).
\]
Any element $b(xy)$ of $B\flat (X+Y)$ is the image through this morphism of some polynomial $\psi(b_{1},x,b_{2},y)$ in $B\flat X + B\flat Y$. Note that this polynomial cannot contain any monomials obtained as a product of a $b_{i}$ with $xy$ or $yx$. This allows us to write, in the sum $B+X+Y$, the element $b(xy)$ as 
\begin{align*}
\lambda_{1}y(bx)&+\lambda_{2}x(yb)+
\lambda_{3}y(xb)+\lambda_{4}x(by)
+\lambda_{5}(bx)y+\lambda_{6}(yb)x+
\lambda_{7}(xb)y+\lambda_{8}(by)x\\
&+\nu\phi(b,x,y)
\end{align*}
for some $\lambda_{1}$, \dots, $\lambda_{8}$, $\nu\in \K$, where $\phi(b,x,y)$ is the part of the image of $\psi(b_{1},x,b_{2},y)$ in $B+X + Y$ which is not in the homogeneous component of $b(xy)$. Since $B+X+Y$ is the free $\V$-algebra on three generators $b$, $x$ and $y$, from Theorem~\ref{Theorem Homogeneity} we deduce that the first equation in (ii) is again an identity in $\V$. Analogously for $(xy)b$ we deduce the second equation in (ii).
\end{proof}

\begin{remark}
This result may be used to prove the claim made in~\cite{acc} that the category of \defn{Jordan algebras}---commutative and such that $(xy)(xx)=x(y(xx))$---is not algebraically coherent. Indeed, as explained in~\cite{Orzech}, it is not a \emph{category of interest}. 
\end{remark}

In the case of anti-commutative algebras, this characterisation becomes more precise:

\begin{theorem}\label{Theorem AC Alt means Lie or AAAlg}
Let $\K$ be an infinite field. If $\V$ is a subvariety of $\Alt_{\K}$, then the following are equivalent:
\begin{tfae}
\item $\V$ is algebraically coherent;
\item $\V$ is a subvariety of either $\AAAlg_{\K}$ or $\Lie_{\K}$.
\end{tfae}
\end{theorem}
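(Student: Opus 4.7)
The plan is to invoke Theorem~\ref{Theorem AC iff Orzech} in both directions, using anticommutativity to simplify the resulting eight-term law.

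For the implication (ii)$\Rightarrow$(i), the Jacobi identity rearranges to $z(xy)=-x(yz)-y(zx)$ and antiassociativity to $z(xy)=-(zx)y$, both of which are instances of the equation in~(ii) of Theorem~\ref{Theorem AC iff Orzech}. Thus $\Lie_{\K}$ and $\AAAlg_{\K}$ are algebraically coherent, and the property passes to any subvariety since the defining relation is a single universally quantified equation.

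For (i)$\Rightarrow$(ii), let $\V\subseteq\Alt_{\K}$ be algebraically coherent. By Theorem~\ref{Theorem AC iff Orzech} there exist scalars $\lambda_{1},\dots,\lambda_{8}\in\K$ for which the eight-term identity holds in $\V$. Since $xx=0$ yields $ab=-ba$ in every characteristic (via $(x+y)(x+y)=0$), each of the eight monomials reduces to $\pm y(zx)$ or $\pm x(yz)$, collapsing the identity to a single law
$$z(xy)=\alpha\, y(zx)+\beta\, x(yz)\qquad(*)$$
for suitable $\alpha,\beta\in\K$. Write $A=z(xy)$, $B=x(yz)$, $C=y(zx)$; cyclic permutation of the variables in $(*)$ provides the partner laws $B=\alpha A+\beta C$ and $C=\alpha B+\beta A$ at no extra cost.

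The conclusion follows from a short case split. Substituting $y=x$ into $(*)$ and using $xx=0$ together with $x(xz)=-x(zx)$ gives $(\alpha-\beta)\,x(zx)=0$ as a law in $\V$. If $x(xz)=0$ is a law, Lemma~\ref{Lemma Antiassociative} places $\V$ inside $\AAAlg_{\K}$. Otherwise $\alpha=\beta$, and summing the three cyclic forms of $(*)$ yields $(1-2\alpha)(A+B+C)=0$; whenever $2\alpha\neq 1$ in $\K$ this is exactly the Jacobi identity, so $\V\subseteq\Lie_{\K}$. The main technical obstacle is the edge case $2\alpha=1$, which already requires the characteristic to differ from~$2$. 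In characteristic~$3$ one has $\alpha=2$, so $(*)$ itself becomes $A=-(B+C)$, which is Jacobi; in every other characteristic, subtracting two cyclic forms of $(*)$ yields $3(A-B)=0$ and hence $A=B$, so that $z(xy)=x(yz)$, whereupon anticommutativity applied to the outer product on the left produces $(ab)c+a(bc)=0$ and $\V\subseteq\AAAlg_{\K}$. These subcases exhaust all scalar configurations.
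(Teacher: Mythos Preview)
Your proof is correct, but the (i)$\Rightarrow$(ii) direction follows a more circuitous route than the paper's. After reaching the reduced law $z(xy)=\alpha\,y(zx)+\beta\,x(yz)$, the paper substitutes $y=z$ and $x=z$ separately; each substitution yields either the law $a(ab)=0$ (whence antiassociativity via Lemma~\ref{Lemma Antiassociative}) or forces the corresponding coefficient to equal~$-1$. Thus one arrives immediately at either antiassociativity or $\alpha=\beta=-1$, which is Jacobi, with no further case analysis and no dependence on the characteristic. Your single substitution $y=x$ only forces $\alpha=\beta$ (or antiassociativity), so you must then invoke the cyclic-sum identity and handle the edge case $2\alpha=1$ by a separate argument split over $\mathrm{char}\,\K=3$ and $\mathrm{char}\,\K\neq 2,3$. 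Both approaches are valid; the paper's buys simplicity by extracting two independent constraints from the outset, whereas yours shows that a single substitution together with the cyclic symmetry still suffices, at the cost of the extra bookkeeping. Had you also substituted $y=z$ into $(*)$, you would have obtained $(1+\alpha)\,z(zx)=0$ directly and avoided the edge case altogether.
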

\begin{proof}
(ii) implies (i) since $\AAAlg_{\K}$ and $\Lie_{\K}$ are \emph{Orzech categories of interest}~\cite{Orzech}, so their subvarieties are algebraically coherent. To see that (i) implies (ii), we first use anti-commutativity to simplify the identity given in Theorem~\ref{Theorem AC iff Orzech} to
\[
z(xy)=\lambda y(zx)+\mu x(yz)
\]
for some $\lambda$ and $\mu$ in $\K$. Choosing, in turn, $y=z$ and $x=z$, we see that 
\begin{enumerate}
\item either $\lambda=-1$ or $z\cdot zx=0$ is an identity in $\V$, and
\item either $\mu=-1$ or $x\cdot xy=0$ is an identity in $\V$.
\end{enumerate}
In any of the latter cases, $\V$ is a variety of anti-associative algebras by Lemma~\ref{Lemma anti-associative}. We are left with the situation when $\lambda=\mu=-1$, which means that the Jacobi identity holds in $\V$, so that $\V$ a variety of Lie algebras.
\end{proof}

\begin{example}
The variety of \emph{anti-commutative associative algebras} is an example. We have that $0 = x(yy) = (xy)y$ is an identity, so that by Lemma~\ref{Lemma anti-associative} those algebras are anti-associative as well. 

When $\kar(\K)\neq 2$, this implies that $xyz=0$ is an identity. We regain a variety as in Proposition~\ref{Proposition Nil_2}, so since it is not abelian, it cannot be \LACC.
\end{example}

\subsection{A characterisation of Lie algebras amongst anti-commutative algebras}
The condition \LACC\ eliminates one of the two options in Theorem~\ref{Theorem AC Alt means Lie or AAAlg}.

\begin{theorem}\label{Theorem Alt then Lie}
Let $\K$ be an infinite field. If $\V$ is a locally algebraically cartesian closed variety of anti-commutative $\K$-algebras, then it is a subvariety of~$\Lie_{\K}$. In other words, $\Lie_{\K}$ is the largest \LACC\ variety of anti-commutative $\K$-algebras. Thus for any variety $\V$ of anti-commutative $\K$-algebras, the following are equivalent:
\begin{tfae}
\item $\V$ is a subvariety of a \LACC\ variety of anti-commutative $\K$-algebras;
\item the Jacobi identity holds in $\V$.
\end{tfae}
\end{theorem}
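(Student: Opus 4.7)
The plan is to assemble the theorem from previously established pieces, with essentially no new calculation required. The key observation is that \LACC\ implies algebraic coherence, as recorded in Subsection~\ref{(AC)}. Hence if $\V$ is an \LACC\ variety of alternating $\K$-algebras, Theorem~\ref{Theorem AC Alt means Lie or AAAlg} immediately applies and yields a dichotomy: $\V$ is either a subvariety of $\AAAlg_{\K}$ or a subvariety of $\Lie_{\K}$.

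In the second case there is nothing to do. In the first case, $\V$ is a variety of antiassociative algebras which is also \LACC, so Proposition~\ref{Proposition AAAlg} forces $\V$ to be abelian. Since any abelian algebra (satisfying $xy=0$) trivially satisfies both the alternating law and the Jacobi identity, $\V$ is still a subvariety of $\Lie_{\K}$. This proves the first assertion that $\Lie_{\K}$ is the largest \LACC\ variety of alternating $\K$-algebras.

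For the equivalence, the implication (ii)$\To$(i) is obtained by taking $\V$ itself as the enveloping variety when $\V\subseteq \Lie_{\K}$, using that $\Lie_{\K}$ is \LACC\ by Gray's result~\cite{GrayLie} recalled in the introduction. Conversely, if $\V\subseteq \W$ for some \LACC\ variety $\W$ of alternating $\K$-algebras, the first part of the theorem gives $\W\subseteq \Lie_{\K}$, so the Jacobi identity holds in $\W$ and is therefore inherited by $\V$.

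There is no genuine obstacle here: the heavy lifting was done earlier, in Theorem~\ref{Theorem AC iff Orzech} (which depended on the homogeneity result Theorem~\ref{Theorem Homogeneity}), in its alternating refinement Theorem~\ref{Theorem AC Alt means Lie or AAAlg}, and in Proposition~\ref{Proposition AAAlg}. The present statement is the clean packaging of these facts, together with the standard implication \LACC$\Rightarrow$ algebraic coherence, into a categorical characterisation of the Jacobi identity.
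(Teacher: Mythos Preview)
Your argument is correct and follows the same route as the paper, which simply says the result ``combines Theorem~\ref{Theorem AC Alt means Lie or AAAlg} with Proposition~\ref{Proposition AAAlg}''; you have spelled out that combination in full, including the passage through algebraic coherence and the handling of the abelian case. One small slip: in the implication (ii)$\Rightarrow$(i) you write ``taking $\V$ itself as the enveloping variety'', but $\V$ need not be \LACC\ (cf.\ Proposition~\ref{Proposition Nil_2}); the enveloping \LACC\ variety is $\Lie_{\K}$, which is indeed what your citation of~\cite{GrayLie} is pointing at.
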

\begin{proof}
This combines Theorem~\ref{Theorem AC Alt means Lie or AAAlg} with Proposition~\ref{Proposition AAAlg}.
\end{proof}

\begin{remark}
By Proposition~\ref{Proposition Nil_2}, the condition
\begin{tfae}
\setcounter{enumi}{2}
\item $\V$ is \LACC
\end{tfae}
is strictly stronger than the equivalent conditions (i) and (ii).
\end{remark}

\begin{remark}
We could not find any non-abelian examples of \LACC\ strict subvarieties of $\Lie_{\K}$. In the article~\cite{GM-VdL3}, it is explained why such varieties cannot exist.
\end{remark}

\section{Non-anti-commutative algebras}\label{anti-commutative}
An important question which we have to leave open for now, is what happens when the algebras we consider are not anti-commutative. We end this note with some of our preliminary findings, and study the question in detail in the article~\cite{GM-VdL3}.

\subsection{}
Some of the results and techniques used in the previous section are valid for non-anti-commutative algebras of course. For instance, Proposition~\ref{Proposition Nil_2}, Proposition~\ref{Proposition AAAlg} and Theorem~\ref{Theorem AC iff Orzech} are. 

\subsection{}
Proposition~\ref{Proposition Nil_2} tells us in particular that the variety of associative $\K$-algebras satisfying $xyz=0$ is not \LACC. 
Even though it is stated for infinite fields, this is still valid over the ring of integers $\Z$: we do not need to use Theorem~\ref{Theorem Homogeneity}, since we already know that all identities of this particular variety are linear combinations of homogeneous identities. This instance of the proposition contradicts Proposition~6.9 in~\cite{Gray2012}, which claims that the category $\DRng$ of all commutative non-unitary rings (=~$\Z$-algebras) satisfying $xyz=0$ is locally algebraically cartesian closed. Indeed, should this variety be \LACC, then it would be abelian---which is false. 

So, what is wrong? We noticed that the functor $R$ (which, up to equivalence, plays the role of the right adjoint of the functor $\gnab_B^*\colon \Pt_B(\DRng)\to \DRng$) constructed in the proof of~\cite[Proposition~6.9]{Gray2012} is not well defined on morphisms. Let us give a concrete example showing this in detail. We follow the notations from~\cite[Proposition~6.9]{Gray2012}: in particular, for any object $B$, the category $\BDAlg\simeq\Pt_B(\DRng)$ consists of $B$-modules equipped with a compatible commutative ring structure, which means that the identities 
\[
xyz=0,
\qquad
b(xy)=(bx)y=0
\qquad\text{and}\qquad
b(b'x)=(bb')x=0
\]
are satisfied. Then for $X$ in $\DRng$, the ring $R(X)$ is the set
\[
\{f\in \Hom(\Z\ltimes B,X)\mid \text{$f(0,b)x=0$ and $f(0,bb')=0$ for all $x\in X$, $b$, $b'\in B$}\}
\]
with multiplication $(fg)(n,b)=nf(1,0)g(1,0)$ and action $b'f(n,b)=f(0,nb')$ for $f$, $g\in R(X)$, $n\in \Z$ and $b$, $b'\in B$. 

Let $B=\langle b\mid bb=0\rangle=\Z[b]/(bb)$ act on the commutative ring
\[
X=\langle x,y\mid xx=xy=yy=0\rangle= \Z[x,y]/(xx,xy,yy)
\]
by $bx=y$ and $by=0$; then $X$ is not only an object of $\DRng$, but also a $B$-module. Consider also the object
\[
M=\langle m,p,q\mid mp=q,\; mq=pq=mm=pp=qq=0\rangle
\]
of $\DRng$. Let $h\colon X\to M$ be the ring homomorphism sending $x$ and $y$ to $m$. Let $f\in R(X)$ be defined by $f(n,b)=nx+bx$; note that $f=\eta_X(x)$, where $\eta\colon {1_{\BDAlg}\To RU}$ plays the role of the unit of the adjunction, and $U\colon {\BDAlg\to \DRng}$ is the forgetful functor. Then 
\[
R(h)(f)(0,b)\cdot p=(h\comp f)(0,b)\cdot p=h(y)\cdot p=mp=q\neq 0,
\]
which shows that $R(h)(f)$ is not an element of $R(M)$.

\subsection{}
The category $\Leib_{\K}$ of \defn{(right) Leibniz algebras}~\cite{Loday-Leibniz} over $\K$ is the subvariety of~$\Alg_{\K}$ satisfying the \defn{(right) Leibniz identity} $(xy)z=x(yz)+(xz)y$. This identity is clearly equivalent to the Jacobi identity when the algebras are anti-commutative, so that a Lie algebra is the same thing as an anti-commutative Leibniz algebra. However, examples of non-anti-commutative Leibniz algebras exist. Analogously, we can consider the category of \defn{(left) Leibniz algebras}, with corresponding identity $x(yz) = (xy)z + y(xz)$. Both categories are of course equivalent.

We do not know whether Theorem~\ref{Theorem Alt then Lie} extends to the non-anti-commutative case. What is certain, though, is that the category of Leibniz algebras is not locally algebraically cartesian closed. Indeed, using the notations of Proposition~\ref{Proposition Nil_2}, the Leibniz identity allows us to deduce
\[
\begin{cases}
(xy)b=x(yb)+(xb)y\\
(xy)b=-x(by)+(xb)y
\end{cases}
\]
so that $x\cdot yb = -x\cdot by$ in $B+X+Y$. This means that $x\cdot yb_{2}$ and $-x\cdot b_{2}y$ are two distinct elements of $B\flat X+B\flat Y$ which are sent to the same element of $B\flat(X+Y)$ by the morphism $\links B\flat \iota_{X}\; B\flat \iota_{Y}\rechts$. Hence this morphism cannot be an isomorphism, and $\Leib_{\K}$ is not \LACC.

We may ask ourselves what happens in the ``intersection'' between right and left Leibniz algebras. They are called \defn{symmetric Leibniz algebras} and, as shown in~\cite{MaYa}, the chain of inclusions $\Lie_{\K} \subseteq \SLeib_{\K} \subseteq \Leib_{\K}$ is strict. Doing a rearrangement of terms as in
\[
b(xy) = (bx)y + x(by) = b(xy) + (by)x + x(by),
\]
we see that $(by)x + x(by) = 0$. From this we may conclude that, in order to be (LACC), a variety of symmetric Leibniz algebras must either be anti-commutative or abelian. We thus regain the known cases of Lie algebras and vector spaces.

\subsection{}
A variety of algebras is anti-commutative precisely when the free algebra on a single generator is abelian: $xx=0$ is an identity in the variety, if and only if the bracket vanishes on the algebra freely generated by $\{x\}$. This corresponds to the condition that \emph{the free algebra on a single generator admits an internal abelian group structure}. This condition makes sense in arbitrary semi-abelian varieties, and we may ask ourselves whether perhaps it is implied by \LACC, as in the case of symmetric Leibniz algebras. This would allow us to drop the condition that $\V$ is anti-commutative in Theorem~\ref{Theorem Alt then Lie}.

The example of crossed modules proves that this is false. In~\cite{Carrasco-Homology} it is shown that on the one hand, a crossed module $\partial\colon{T\to G}$ with action $\xi$ admits an internal abelian group structure if and only if the groups $T$ and~$G$ are abelian and the action~$\xi$ is trivial. On the other hand, the free crossed module on a single generator is the inclusion $\kappa_{\Z,\Z}\colon\Z\flat \Z\to \Z+\Z$, equipped with the conjugation action. We see that in this case the free object on one generator is not abelian, even though $\XMod$ is a locally algebraically cartesian closed semi-abelian variety. However, it is not a variety of non-associative algebras of course.

\subsection{}
Perhaps this is not the right conceptualisation, and we must think of other ways of making the identity $xx=0$ categorical. The question then becomes whether \LACC, or any other appropriate catego\-rical-algebraic condition, would imply this new characterisation.

\section*{Acknowledgements}
Thanks to James R.~A.~Gray, George Janelidze, Zurab Janelidze for fruitful discussions and important comments on our work. We must thank the editor, Pino Rosolini, and the referee for very useful comments on our work that helped us improve the paper. We would also like to thank the University of Cape Town and Stellenbosch University for their kind hospitality during our stay in South Africa, and the Institut de Recherche en Mathématique et Physique (IRMP) for its kind hospitality during the first author's stays in Louvain-la-Neuve.


\providecommand{\noopsort}[1]{}
\providecommand{\bysame}{\leavevmode\hbox to3em{\hrulefill}\thinspace}
\providecommand{\MR}{\relax\ifhmode\unskip\space\fi MR }
\providecommand{\MRhref}[2]{%
  \href{http://www.ams.org/mathscinet-getitem?mr=#1}{#2}
}
\providecommand{\href}[2]{#2}

\end{document}